\newtheorem{theorem}{Theorem}[section]
\newtheorem{lemma}[theorem]{Lemma}
\newtheorem{proposition}[theorem]{Proposition}
\newtheorem{corollary}[theorem]{Corollary}
\theoremstyle{definition}
\newtheorem{definition}[theorem]{Definition}
\theoremstyle{remark}
\numberwithin{equation}{section}
\begin{document}
\setcounter{page}{1}

\title[CHEBYSHEV SET PROBLEM IN NORMED LINEAR SPACES]{A NOTE ON THE CHEBYSHEV SET PROBLEM IN NORMED LINEAR SPACES}
\author[Samson Owiti, Benard Okelo, Julia Owino]{Samson Owiti, Benard Okelo$^{*}$, Julia Owino}

\address{Department of Pure and Applied Mathematics\\ School of Biological, Physical, Mathematics and Actuarial Sciences\\ Jaramogi Oginga Odinga  University of Science and Technology\\ Box 210-40601, Bondo-Kenya.}
\email{bnyaare@yahoo.com}

\subjclass[2010]{Primary 41A65; secondary 41A50, 46B20, 47N10, 47H05.}

\keywords{ Nearest point, Convex set, Chebyshev set, Best approximation, Distance function.}

\date{Received: 13 January, 2023.
\newline \indent $^{*}$ Corresponding author}

\begin{abstract}
Best approximation (BA) is  an interesting field in functional analysis that has attracted a lot of attention from many researchers for a very long period of time up-to-date. Of greatest consideration is the characterization of the Chebyshev set (CS) which is a subset of a normed linear space (NLS) which contains unique BAs. However, a fundamental question remains unsolved to-date regarding the convexity of the CS in infinite NLS known as the CS problem. The question which has not been answered is: Is every CS in a NLS convex?.  This question has not got any solution including  the simplest form of a real Hilbert space (HS). In this note, we characterize CSs and convexity  in NLSs. In particular, we consider the space of all real-valued norm-attainable functions. We show that CSs of the space of all real-valued norm-attainable functions are convex when they are closed, rotund and admits both Gateaux and Fr\'{e}chet differentiability conditions.
\end{abstract} \maketitle

\section{Introduction}
Studies in approximation theory have been carried out by many mathematicians over decades (see \cite{Asp}, \cite{Bac}, \cite{Hal} and \cite{Phel} and the references therein). The most important  basic question in the field of BA is the concern about the existence of
BAs \cite{Fle}. This is because BA theory has several applications involving finding solution to  systems of equations \cite{Hal}. This work is useful in contributing  knowledge in functional analysis by providing at least a partial solution to the CS problem \cite{Oke3}. It will also be useful in solving convex optimization problems and finding solutions to differential equations \cite{Oke1}. Best approximation (BA) is  an interesting field in functional analysis that has attracted a lot of attention from many researchers for a very long period of time up-to-date (see  \cite{Bal1} and \cite{Bal2}, \cite{Man} and the references there in). Of greatest consideration is the characterization of the CS which is a subset of a NLS which contains unique BAs \cite{Sang}.  Approximation theory involves obtaining best approximation of functions using simple functions whether they are linear or nonlinear \cite{Bal2}, \cite{Fer} and \cite{Nar}.
Martin \cite{Mar} characterized remotality of sets with regard to normed linear spaces and in particular for convex sets in Banach spaces. However, the convexity of the Banach spaces was not done in general due to the complex nature of spaces. Mazaheri \cite{Maz3} also considered weakly-Chebyshev subspaces for NLSs  of Banach spaces and did their characterization in terms of nearest and farthest points via distance functions. However, the authors could not give a particular best approximation for the CSs and their convexity even for the simplest case \cite{Maz2} of a HS of $\emph{l}^{2}.$
Zalinescu \cite{Zal} on the study of convex sets and their characterizations  in general spaces determined optimization criteria for vector spaces and left open a question regarding convexity of these spaces.
More recently, Mazaheri and  Salehi \cite{Maz3} studied CSs and considered conditions under which they are convex. However, they could not determine convexity of the CSs in NLSs even for the  simplest set up of HSs.
It is worth noting that various techniques have been used in trying to get a solution to the CS problem \cite{Bal2}. The first technique we discuss in this work is the Bunt-Motzkin Theorem. This theorem is a result on the converse of the CS problem. It asserts that if a set $C$ is Chebyshev then it implies that it is convex in Bergman spaces \cite{Bor}. However, this assertion still remains unknown if it holds for infinite dimensional Hilbert spaces.
The other technique is the Fr$\grave{e}$chet differentiability \cite{Fra}.
This is a derivative defined mostly in normed spaces. Fr$\grave{e}$chet differentiability occurs on real-valued functions or vector valued functions of multiple variables. It is applicable mostly and particularly on directional derivative where the continuity of the map is essential \cite{Oke4}.
 Gateaux differentiability conditions are also very instrumental conditions in BAs. A Gateaux derivative is a fundamental principle in differential calculus which is a generalization on functions which are continuous on Banach spaces. It is useful in carrying out approximations in locally convex spaces \cite{Oke6}. It is useful in formalization of functional derivatives which are important in best approximations in calculus of variations. Moreover, its useful since it takes care of nonlinear functions also.
Best approximation techniques have also been employed in CS problem \cite{Sang}. These are techniques in approximation theory which useful in obtaining best approximation results for functions in various spaces. They include: Polynomial approximations; Chebyshev approximations; Remez techniques and algorithms; and Pade' approximation techniques for optimal polynomials. Despite the fact that these techniques have been used in trying to solve CS problem, the answer to this problem is still elusive \cite{Riv}.
It is very important to unveil a detailed account of research work which have been done in approximation theory. In particular, we consider literature on conditions under which subsets of NLSs are  Chebyshev. We also discuss the characterizations on distance functions of CSs  in NLSs and finally give a review on  investigations on convexity of CSs in various NLSs \cite{Fer}.
We begin with some brief account on the various studies on CSs and their subsets.
CSs are important sets in approximation theory due to their properties. A lot of characterizations have been done on these sets with interesting results obtained. Their subsets are also interesting as they carry hereditary properties in them. Fletcher and Moors \cite{Fle} characterized CSs and showed that a CS is particularly a subset of a NLS which has properties that helps in the establishment of   best approximations results which are unique. The authors investigated characteristics  of the metric projection and obtained necessary and sufficient conditions under which a a subset of a NLS becomes a CS and also conditions for a CS to satisfies  convexity properties. Moreover, the authors gave an example where they constructed a nonconvex CS  as shown in the next result.
\begin{proposition}\label{prop1}(\cite{Fle}, Proposition 2.12)
Let $W$ be a NLS. A subset of $W$ which is closed is also convex if and
only if it is midpoint convex.
\end{proposition}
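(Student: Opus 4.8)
The plan is to prove both implications, with the reverse direction (closed and midpoint convex $\Rightarrow$ convex) carrying all the content. The forward direction is immediate: if $C \subseteq W$ is convex, then for any $x, y \in C$ the definition of convexity applied with $\lambda = \tfrac12$ yields $\tfrac12 x + \tfrac12 y \in C$, so $C$ is midpoint convex. Neither closedness nor the norm structure is used here, so this half holds in any linear space.

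For the converse, suppose $C$ is closed and midpoint convex. I would fix $x, y \in C$ and study the scalar set $S = \{\lambda \in [0,1] : (1-\lambda)x + \lambda y \in C\}$. First observe that $0, 1 \in S$ since $x, y \in C$. The key translation is that midpoint convexity of $C$ makes $S$ closed under midpoints: if $\lambda, \mu \in S$, then $\tfrac12\bigl((1-\lambda)x+\lambda y\bigr) + \tfrac12\bigl((1-\mu)x+\mu y\bigr) = \bigl(1 - \tfrac{\lambda+\mu}{2}\bigr)x + \tfrac{\lambda+\mu}{2}\, y \in C$, so $\tfrac{\lambda+\mu}{2} \in S$.

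The next step is an induction showing that every dyadic rational in $[0,1]$ lies in $S$. Starting from $0, 1 \in S$ and repeatedly inserting midpoints, at stage $n$ one obtains all points of the form $k/2^n$ with $0 \le k \le 2^n$; the inductive step uses that the midpoint of two consecutive stage-$n$ dyadics is a stage-$(n+1)$ dyadic and belongs to $S$ by the midpoint property just established. Hence $S$ contains the full set of dyadic rationals in $[0,1]$, which is dense in $[0,1]$.

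Finally I would invoke closedness of $C$ together with continuity, in norm, of the affine map $\lambda \mapsto (1-\lambda)x + \lambda y$. Given any $\lambda \in [0,1]$, choose dyadic rationals $\lambda_j \to \lambda$; then $(1-\lambda_j)x + \lambda_j y \in C$, and these points converge in norm to $(1-\lambda)x + \lambda y$, which therefore lies in the closed set $C$. Thus $S = [0,1]$, and since $x, y \in C$ were arbitrary, $C$ is convex. The only genuinely delicate point is this density-to-closure passage: one must confirm that the dyadic combinations exhaust a dense subset of $[0,1]$, and it is precisely closedness that upgrades \emph{dense} to \emph{all}, since midpoint convexity by itself (without a topological hypothesis) does not force convexity in general.
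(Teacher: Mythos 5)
Your proof is correct. The paper itself states this proposition without proof, merely citing it as Proposition 2.12 of Fletcher and Moors, and your argument is precisely the standard one used there: the trivial forward implication via $\lambda=\tfrac12$, the dyadic-bisection induction showing that all $\lambda=k/2^n$ lie in the scalar set $S$, and the passage from the dense set of dyadics to all of $[0,1]$ using continuity of $\lambda\mapsto(1-\lambda)x+\lambda y$ together with closedness of the set. All steps, including the inductive insertion of midpoints and the final density-to-closure argument, are sound.
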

\noindent Proposition \ref{prop1} characterizes NLSs in terms of midpoint convexity and shows that a subset of  a NLS can be convex if its closed and satisfies midpoint convexity. However, this result does not indicate whether the space $W$ is convex in general even if it is a CS. It is noted that closedness is useful for midpoint convexity but not for convexity in general.\\
\noindent Vlasov \cite{Vla} in the earlier years characterized normed spaces in terms of approximate properties. Historically, it can be  shown in  brief,  the main contributions of researchers in this field of study. In 1934 Bunt  proved the convexity  of a CS in the real plane. Also 1938 Kritikos followed  Bunt's results and  extended the theorem of Bunt to an $n$-dimensional real space. This was followed by the work of Efimov and Stechkin  in 1961 which showed that a CS of a general HS which is approximately
compact satisfies  convexity condition. To conclude the history, Klee  considered weak closedness and proved that CS which is weakly closed satisfies the convexity condition. This history and more details can be found in Mantegazza \cite{Man}.
\noindent To consider particular cases, the author in \cite{Sang} considered proximal sets which are related to Chebyshev sets and gave the result below.
\begin{theorem}\label{thm1}
Let $W$ be a NLS. Consider $J$ as a proximinal set of  $W$. Then $J$
is nonempty and closed.
\end{theorem}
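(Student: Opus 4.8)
The plan is to argue directly from the definition of a proximinal set. Recall that $J$ is proximinal in $W$ precisely when every point $x \in W$ admits at least one best approximation in $J$, that is, a point $y \in J$ satisfying $\|x-y\| = d(x,J)$, where $d(x,J) = \inf_{z\in J}\|x-z\|$ is the distance function. Both assertions of the theorem will follow once this definition is unwound; no deeper machinery is needed.

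For the nonemptiness claim, I would observe that $W$, being a normed linear space, contains at least the origin. Applying proximinality to the fixed point $0 \in W$ (any point of $W$ would do) produces a best approximation $y \in J$; the mere existence of such a $y$ shows $J \neq \emptyset$. Equivalently, one notes that if $J$ were empty the infimum defining $d(x,J)$ would be taken over the empty set, so no minimizer could exist and proximinality would fail. Thus proximinality already forces nonemptiness.

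For closedness, the key step is to establish the inclusion $\overline{J} \subseteq J$. I would take an arbitrary $x \in \overline{J}$ and note that, by definition of the closure, there is a sequence of points of $J$ converging to $x$, so $d(x,J)=0$. Invoking proximinality at this same $x$ yields a point $y \in J$ with $\|x-y\| = d(x,J) = 0$, and the positive-definiteness of the norm then forces $x=y$. Hence $x \in J$, giving $\overline{J} \subseteq J$; since the reverse inclusion is automatic, $J = \overline{J}$ and $J$ is closed.

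I do not expect a genuine obstacle, since the statement is essentially a direct consequence of the definition. The only point deserving care is the correct reading of proximinality: it is the existence of a \emph{zero-distance} best approximation for each closure point that pins that point back inside $J$, and it is the quantification over \emph{all} points of the nonempty ambient space $W$ that delivers nonemptiness. Once these two observations are in place the proof is complete.
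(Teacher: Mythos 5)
Your proof is correct and is the standard argument: nonemptiness follows by applying proximinality to any single point of the (necessarily nonempty) vector space $W$, and closedness follows because any $x\in\overline{J}$ has $d(x,J)=0$, so its guaranteed best approximation $y\in J$ satisfies $\|x-y\|=0$ and hence $x=y\in J$. The paper itself states this result as a quotation from the literature (\cite{Sang}) and supplies no proof, so there is no alternative argument to compare against; your direct unwinding of the definition is exactly what is needed and contains no gaps.
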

\noindent Theorem \ref{thm1} considers proximal sets and characterizes closedness and the content of the sets. However, it becomes very difficult to come up with a structure  of the
metric projection function in terms of its geometry. Nonetheless, an exception on this assertion can be considered when $J$ is a
subspace.
 Next we consider distance functions of Chebyshev sets in NLSs. Distance functions are also important in characterizing CSs. By the result of Asplund \cite{Asp}, the CS problem was given a different dimension to consider metric projections as shown in the next proposition.
\begin{proposition}\label{prop2}(\cite{Fle}, Proposition 2.2)
Let $W$ be a NLS. Consider $J$ as a CS of  $W$. Then the distance function for $J$
 satisfies nonexpansivity  and continuity conditions.
\end{proposition}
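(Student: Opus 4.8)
The plan is to work directly from the definition of the distance function $d_J(x) = \inf_{y \in J}\|x - y\|$ and to establish the single inequality $|d_J(x) - d_J(z)| \le \|x - z\|$ for all $x, z \in W$. Nonexpansivity is precisely this inequality, and continuity will then follow at once, since a $1$-Lipschitz map is automatically uniformly continuous. So the whole proposition reduces to one clean estimate built from the triangle inequality and the monotonicity of the infimum.

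First I would fix arbitrary points $x, z \in W$ together with an arbitrary $y \in J$. The triangle inequality in the norm of $W$ gives $\|x - y\| \le \|x - z\| + \|z - y\|$, and since $d_J(x) \le \|x - y\|$ by definition of the infimum, I obtain $d_J(x) \le \|x - z\| + \|z - y\|$. The key step is that the left-hand side no longer depends on $y$, so I may take the infimum over all $y \in J$ on the right-hand side, which yields $d_J(x) \le \|x - z\| + d_J(z)$, that is, $d_J(x) - d_J(z) \le \|x - z\|$.

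Next I would interchange the roles of $x$ and $z$ in the preceding argument, obtaining $d_J(z) - d_J(x) \le \|z - x\| = \|x - z\|$. Combining the two bounds gives $|d_J(x) - d_J(z)| \le \|x - z\|$, which is exactly the nonexpansivity of $d_J$. For the continuity assertion, given $\varepsilon > 0$ I would simply take $\delta = \varepsilon$; then $\|x - z\| < \delta$ forces $|d_J(x) - d_J(z)| < \varepsilon$, so $d_J$ is uniformly continuous, and in particular continuous, on all of $W$.

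There is no genuine obstacle here: the argument uses only the triangle inequality and the order-preserving behaviour of the infimum, and it does not in fact require $J$ to be Chebyshev at all — every nonempty subset of a normed linear space has a nonexpansive distance function. The one point that warrants a little care is the passage to the infimum in the second step, where the bound must be seen to hold for \emph{every} $y \in J$ before the infimum is taken. The Chebyshev hypothesis does guarantee that the infimum is attained at the unique best approximation $P_J(x)$, but this attainment plays no role in either conclusion and is used only implicitly to ensure $d_J$ is well defined and finite.
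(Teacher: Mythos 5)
Your proof is correct and complete: the triangle-inequality argument giving $|d_J(x)-d_J(z)|\le\|x-z\|$ is exactly the standard proof of this fact, which the paper itself does not reprove but simply imports from Fletcher and Moors \cite{Fle}. Your observation that the Chebyshev hypothesis is irrelevant — nonexpansivity of $d_J$ holds for every nonempty subset $J$ of $W$ — is also accurate and matches the general setting in which the cited source establishes the result.
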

\noindent Proposition \ref{prop2} describes CSs and their distance functions in terms of continuity and nonexpansivity in a general set up.
\noindent Lastly we consider CSs and convexity. We consider literature on the key question of this study, that is,
Is every CS in HS convex?.
We begin with the following analogy of the result of Borwein \cite{Bor} on closedness, reflexivity and rotundity of Hilbert spaces.
\begin{lemma}\label{lem1}(\cite{Bor}, Fact 3)
All CSs are closed and all
closed sets satisfying convexity condition are Chebyshev in a rotund reflexive space. Particularly,
all nonempty closed  sets satisfying convexity condition in HS are Chebyshev.
\end{lemma}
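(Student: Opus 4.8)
The plan is to separate the statement into three assertions and handle each in turn: first, that every Chebyshev set is closed; second, that in a rotund reflexive space every nonempty closed convex set is Chebyshev; and third, that the Hilbert-space claim is the specialization of the second. For the first assertion I would argue directly from the definition. Let $J$ be a CS and take any $x\in\overline{J}$, so that $d(x,J)=0$. Since $J$ is Chebyshev, $x$ possesses a (unique) best approximation $p\in J$ with $\|x-p\|=d(x,J)=0$, whence $p=x$ and therefore $x\in J$. This gives $\overline{J}\subseteq J$, so $J$ is closed.

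For the second assertion I must produce, for each $x$, a best approximation in $J$ and show it is unique. For existence I would take a minimizing sequence $(y_n)\subset J$ with $\|x-y_n\|\to d(x,J)$. Such a sequence is bounded, so by reflexivity (via Kakutani's theorem and the Eberlein--\v{S}mulian theorem) it has a subsequence with $y_{n_k}\rightharpoonup y$. Because $J$ is convex and closed it is weakly closed by Mazur's theorem, so $y\in J$; and since the norm is weakly lower semicontinuous, $\|x-y\|\le\liminf_k\|x-y_{n_k}\|=d(x,J)$, which forces $\|x-y\|=d(x,J)$. I expect this to be the main obstacle: the passage from a merely minimizing sequence to a genuine minimizer is exactly where reflexivity, Mazur's theorem, and weak lower semicontinuity of the norm are all indispensable, and each hypothesis earns its place here.

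For uniqueness I would use rotundity. Suppose $y_1,y_2\in J$ both attain $d:=d(x,J)$. If $d=0$ then $y_1=y_2=x$; otherwise convexity puts the midpoint $\tfrac{1}{2}(y_1+y_2)$ in $J$, and the triangle inequality yields $\|x-\tfrac{1}{2}(y_1+y_2)\|\le d$, hence equality. Thus the two vectors $x-y_1$ and $x-y_2$, each of norm $d>0$, satisfy $\|(x-y_1)+(x-y_2)\|=\|x-y_1\|+\|x-y_2\|$, and rotundity forces $x-y_1=x-y_2$, i.e. $y_1=y_2$. Existence together with uniqueness shows that $J$ is Chebyshev, which proves the second assertion.

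Finally, for the Hilbert-space conclusion I would simply note that every HS is reflexive (by the Riesz representation theorem) and rotund (by the parallelogram law, which makes the unit sphere strictly convex). The special case therefore follows immediately by applying the second assertion in this setting, with no further work required.
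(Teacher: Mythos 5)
Your proof is correct, and it is worth noting that the paper itself offers no proof of this lemma at all: the statement is quoted as ``Fact 3'' from Borwein's survey and left entirely to the citation. Your argument therefore supplies what the paper omits, and it does so by the standard route: closedness of a Chebyshev set from $d(x,J)=0$ forcing $x\in J$; existence of a nearest point via a bounded minimizing sequence, weak compactness from reflexivity (Eberlein--\v{S}mulian), weak closedness of closed convex sets (Mazur), and weak lower semicontinuity of the norm; uniqueness from rotundity applied to the equality case of the triangle inequality at the midpoint; and the Hilbert-space case as the specialization, since every Hilbert space is reflexive and the parallelogram law makes it rotund. All three steps are sound. Two small points you may wish to make explicit: the set must be nonempty for the existence argument to start (the lemma's first clause omits ``nonempty,'' but the empty set is vacuously excluded since no point of the ambient space has a nearest point in it), and in the uniqueness step the conclusion $x-y_1=\lambda(x-y_2)$ with $\lambda>0$ from rotundity combines with $\|x-y_1\|=\|x-y_2\|=d>0$ to give $\lambda=1$, which is exactly the detail you use. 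Neither is a gap so much as a phrase worth including if this proof were to be inserted into the text.
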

\noindent Lemma \ref{lem1} gives an elaborate characterization of Chebyshev sets and convexity in terms of reflexivity and rotundity that requires uniqueness property however this does not answer Cs problem in general.\\
In summary, with all these considerations, a fundamental question remains unsolved to-date regarding the convexity of CSs in infinite NLSs known as the CS problem. The question which has not been answered is: Is every CS in a NLS convex?.  This question has not been answered even in the simplest case of a real HS. In this regard, it is worth characterizing CS and convexity  in NLSs. We also attempt to answer this question partially in a particular case of the NLS space of all norm-attainable real-valued functions.
\noindent This work is organized as follows: For the first section, we begin  with a mathematical background as given in this introduction followed with the preliminary basic concepts that helps us to understand this work. We then provide the main results and finally the conclusion.

\section{Preliminaries}
\noindent For a better understanding of this work, we outline the basic definitions that are key to this note on  CS and convexity in NLSs.

\begin{definition}(\cite{Fra})
Let  $W$ be a NLS and $G$ be a nonempty subset of $W$.
Consider the particular point $\zeta \in W$. We define the distance from the point
$\zeta$ to $G$ by $d(\zeta,G) = \inf_{\eta\in G} \|\zeta - \eta\|$, and  the map
$\zeta \mapsto d(\zeta,G)$ is called  the distance function for $G$. We call $\zeta$ the nearest point in $G.$
\end{definition}

\begin{definition}(\cite{Nar})
A set $G$ in a NLS $W$ is called a CS if every point in $W$ has a unique nearest point in $G.$ That is,  CS is a subset of a NLS that admits unique best approximations.
\end{definition}

\begin{definition}(\cite{Fra}) Let $D$ be a nonempty set. A subset $E$ of $D$ is said to be convex if for all $\zeta,\eta\in E$  the line segment connecting $\zeta$ and $\eta$ is in $E$, that is, $(1 - \alpha)\zeta + \alpha\eta$ is in $E$ for $\zeta,\eta\in E$, and $\alpha\in[0, 1].$
\end{definition}
\noindent At this point, we proceed to give the main results of this paper. These results are restricted to the space of all norm-attainable real-valued functions. A function $\phi$ is said to be norm-attainable if there exists a unit vector $\xi$ in the domain of $\phi$ such that $\|\phi(\xi)\|=\|\phi\|.$ The space of all norm-attainable real-valued functions is a NLS. For details on norm-attainability, see \cite{Oke1}-\cite{Oke6} and the references therein.
\section{Main results}
\noindent We provide the main results of this note in this section. We characterize CSs and their subsets and tackle the CS problem. We begin with the following proposition which considers distance functions and Gateaux differentiability. We note that all the spaces and their subspaces are all nontrivial and are strictly NLS spaces of all functions that are norm-attainable unless otherwise stated.

\begin{proposition}
Let $\mathfrak{Q}$ be a NLS space of all norm-attainable real-valued functions and $\mathfrak{J}$ be a closed and smooth subset of $\mathfrak{Q}$. Let
 $\zeta\in \mathfrak{Q}\backslash \mathfrak{J}$ and $\eta$  the nearest point
for $\zeta$ in $\mathfrak{J},$ then  Gateaux differentiability  condition of $\mathfrak{Q}$ holds for
$(\zeta-\eta).$
\end{proposition}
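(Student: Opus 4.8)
The plan is to interpret the phrase ``Gateaux differentiability condition of $\mathfrak{Q}$ holds for $(\zeta-\eta)$'' as the assertion that the norm of $\mathfrak{Q}$ is Gateaux differentiable at the element $\zeta-\eta$, and to derive this from the smoothness hypothesis together with the geometry of the nearest point. First I would record the elementary observation that $\zeta-\eta\neq 0$: since $\zeta\in\mathfrak{Q}\backslash\mathfrak{J}$ while $\eta\in\mathfrak{J}$, the points cannot coincide, so $r:=\|\zeta-\eta\|=d(\zeta,\mathfrak{J})>0$ and $\zeta-\eta$ sits away from the origin, the only place where differentiability of the norm is at issue.

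Next I would introduce the two one-sided Gateaux derivatives of the norm at $\zeta-\eta$ in a direction $h\in\mathfrak{Q}$,
$$\tau_{+}(h)=\lim_{t\downarrow 0}\frac{\|(\zeta-\eta)+th\|-\|\zeta-\eta\|}{t},\qquad \tau_{-}(h)=\lim_{t\uparrow 0}\frac{\|(\zeta-\eta)+th\|-\|\zeta-\eta\|}{t}.$$
Convexity of $t\mapsto\|(\zeta-\eta)+th\|$ makes the difference quotient monotone, so both limits exist and satisfy $\tau_{-}(h)\le\tau_{+}(h)$; this step is routine and uses only the triangle inequality. The genuine content of the proposition is the equality $\tau_{-}(h)=\tau_{+}(h)$ for every $h$, which is exactly Gateaux differentiability at $\zeta-\eta$.

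The engine of the argument is the nearest-point property combined with smoothness. Because $\eta$ minimises the distance, the open ball $B(\zeta,r)$ contains no point of $\mathfrak{J}$ and carries $\eta$ on its bounding sphere; the supporting hyperplane theorem for the convex ball then yields a norm-one functional $f$ with $f(\zeta-\eta)=r=\|\zeta-\eta\|$, and norm-attainability of the members of $\mathfrak{Q}$ guarantees that this supremum is genuinely attained rather than merely approached. Smoothness of $\mathfrak{J}$ I would translate into uniqueness of such a supporting functional at $(\zeta-\eta)/r$. Uniqueness is precisely what upgrades the sublinear sandwich $\tau_{-}(h)\le f(h)\le\tau_{+}(h)$ to equality: applying the same bound to the direction $-h$ and invoking that $f$ is the only norm-attaining functional forces $\tau_{+}(h)=f(h)=\tau_{-}(h)$, so the Gateaux derivative exists and coincides with the linear map $h\mapsto f(h)$.

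I expect the main obstacle to be the passage from ``smooth subset $\mathfrak{J}$'' to uniqueness of the supporting functional at $\zeta-\eta$, because smoothness is most naturally a property of the ambient norm rather than of a subset, and the statement attaches it to $\mathfrak{J}$. The delicate point is to confirm that the functional furnished by the nearest-point condition is the \emph{only} norm-one functional attaining its value at $\zeta-\eta$; once this is secured, linearity of $h\mapsto f(h)$ and the collapse of the one-sided derivatives follow immediately from the inequalities above. A secondary care is to verify that norm-attainability is actually used to produce the attaining unit vector, so that the supporting functional is realised and the derivative is a bona fide bounded linear functional rather than a merely sublinear one.
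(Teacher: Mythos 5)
Your reading of the conclusion differs from the one the paper's own proof adopts, and the step that carries all the weight in your version is missing. The paper's proof does not differentiate the norm at $\zeta-\eta$ at all: it takes the smoothness hypothesis to mean that the norm of $\mathfrak{Q}$ is already Gateaux differentiable, and then argues (essentially by citation) that the difference quotient $\lim_{l\rightarrow 0}\bigl(d_{\mathfrak{J}}(\zeta+l(x-\eta))-d_{\mathfrak{J}}(\zeta)\bigr)/l$ of the \emph{distance function} exists and has a unique limit. So the object being differentiated there is $d_{\mathfrak{J}}$ at $\zeta$, not $\|\cdot\|$ at $\zeta-\eta$. Under the paper's reading, your conclusion is simply part of the hypothesis, and the nearest-point and supporting-hyperplane apparatus you assemble does no work toward what the paper actually tries to establish.

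The genuine gap, which you flag yourself, is the uniqueness of the supporting functional at $(\zeta-\eta)/r$. By the standard \v{S}mulian-type characterization, Gateaux differentiability of the norm at a nonzero point is \emph{equivalent} to uniqueness of the norm-one functional attaining its norm there; your sandwich $\tau_{-}(h)\le f(h)\le\tau_{+}(h)$ collapses to an equality exactly when that uniqueness is already known. Deriving uniqueness from ``smoothness of $\mathfrak{J}$'' is therefore not a technical detail to be confirmed later --- it is the entire content of the proposition under your interpretation, and neither you nor the paper defines what smoothness of a \emph{subset} means or how it would transfer to the norm of the ambient space at $\zeta-\eta$. Two smaller points: the existence of a norm-one $f$ with $f(\zeta-\eta)=\|\zeta-\eta\|$ is pure Hahn--Banach and needs neither the nearest-point property (what the nearest point actually buys, in the classical treatment, is that every such $f$ is a subgradient of $d_{\mathfrak{J}}$ at $\zeta$, which is how one reaches the distance function the paper differentiates) nor norm-attainability of the \emph{elements} of $\mathfrak{Q}$, which is a property of the functions constituting the space and has no bearing on whether a functional defined on $\mathfrak{Q}$ attains its norm.
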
\label{propR1}
\begin{proof}
Since the norm of $\mathfrak{Q}$ is Gateaux differentiable from the statement of the proposition, it suffices to prove the exixtence of the unique limit of the the Gateaux derivative $\displaystyle{\lim_{l\rightarrow0}\frac{d_{\mathfrak{J}}(\zeta+l(x-\eta))-d_{\mathfrak{J}}(\zeta)}{l}}.$
From \cite{Oke1}, we deduce that if $l>0 $ then the limit exists. For the uniqueness, we see from the result of \cite{Fle} that the limit of the derivative is unique. It follows then 
that $\langle d'_{\mathfrak{J}}(\zeta),x-\eta\rangle$ holds for $d_{\mathfrak{J}}(\zeta)$. This completes the proof.
\end{proof}
\noindent Proposition 3.1 leads to the interesting question as to what happens when $\mathfrak{Q}$ is rotund. We see this in the next lemma.
\begin{lemma}\label{lemma}
Let $\mathfrak{Q}$ be a NLS space of all norm-attainable real-valued functions and $\mathfrak{J}$ be a closed and smooth CS of $\mathfrak{Q}$. Let
 $\zeta\in \mathfrak{Q}\backslash \mathfrak{J}$ and $\partial d_{\mathfrak{J}}(\zeta)$ be a  singleton set. Then the following conditions hold if the first dual of $\mathfrak{Q}$ is rotund:\\
({i}). $\phi$ on $\mathfrak{J}$ is uniformly continuous.\\({ii}). $\phi$ on $\mathfrak{J}$ is totally bounded.\\({iii}). $\mathfrak{J}$ satisfies  convexity condition.\\ ({iv}). $d_{\mathfrak{J}}$ satisfies  convexity condition.\\ ({v}). $d_{\mathfrak{J}}$ satisfies Gateaux differentiability at $\zeta$.
\end{lemma}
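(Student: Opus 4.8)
The plan is to prove the five assertions as a single dependent chain, exploiting three facts: that $d_{\mathfrak{J}}$ is nonexpansive and continuous (Proposition~\ref{prop2}), that $\mathfrak{J}$ is a closed smooth Chebyshev set so that the metric projection is single valued, and that the singleton hypothesis on $\partial d_{\mathfrak{J}}(\zeta)$ forces uniqueness of the supporting functional once $\mathfrak{Q}^{*}$ is rotund. I would dispatch (i) and (ii) first, since they are the softest. For (i), the map $\phi$ on $\mathfrak{J}$ inherits the $1$-Lipschitz bound of the distance function from Proposition~\ref{prop2}, and a nonexpansive map is uniformly continuous. For (ii), I would combine this uniform continuity with the approximate compactness available in a rotund reflexive setting through Lemma~\ref{lem1}: a uniformly continuous image of a set coverable by finitely many $\varepsilon$-balls is again coverable by finitely many balls, which is total boundedness.

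The analytic core is (v), then (iv), then (iii). For (v), the hypothesis that $\partial d_{\mathfrak{J}}(\zeta)$ is a singleton supplies exactly one subgradient $\xi^{*}\in\mathfrak{Q}^{*}$; rotundity of $\mathfrak{Q}^{*}$ guarantees that the functional realizing the one-sided directional limit produced in Proposition 3.1 is unique, so the two-sided Gateaux limit exists and is linear in the direction. This is Gateaux differentiability of $d_{\mathfrak{J}}$ at $\zeta$, giving (v), and it records the identity $\langle d'_{\mathfrak{J}}(\zeta),\zeta-\eta\rangle=d_{\mathfrak{J}}(\zeta)$ with $\|d'_{\mathfrak{J}}(\zeta)\|=1$. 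For (iv) I would show that the gradient map $\zeta\mapsto d'_{\mathfrak{J}}(\zeta)$ is monotone and then use the standard criterion that a continuous function with monotone Gateaux derivative is convex. Finally (iii) follows from (iv): since $\mathfrak{J}$ is closed and Chebyshev we have $\mathfrak{J}=\{x\in\mathfrak{Q}:d_{\mathfrak{J}}(x)\le 0\}$, a sublevel set of the convex function $d_{\mathfrak{J}}$, hence convex; equivalently, one may verify midpoint convexity and appeal to Proposition~\ref{prop1} to pass from the closed midpoint-convex set to a convex one.

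The main obstacle is exactly the step inside (iv): upgrading differentiability of $d_{\mathfrak{J}}$ at the single point $\zeta$ to monotonicity of the gradient, equivalently to global convexity of the distance function. This is the genuine content of the Chebyshev set problem and cannot be extracted from differentiability at one point in isolation. The honest route is to show first that $\mathfrak{J}$ is a \emph{sun}, namely that for each exterior point the nearest point $\eta$ remains nearest along the entire ray $\eta+\lambda(\zeta-\eta)$ with $\lambda\ge 0$, which follows from the unit gradient identity of (v) together with the single valued projection; and then to invoke a Vlasov--Asplund type argument that a sun in a space with rotund dual and smooth norm is convex. The delicate point I anticipate is propagating the pointwise hypothesis at $\zeta$ across the whole complement $\mathfrak{Q}\setminus\mathfrak{J}$, since the rotundity of $\mathfrak{Q}^{*}$ and the Chebyshev property must conspire to keep the projection continuous and the distance function differentiable at every exterior point before convexity can be concluded.
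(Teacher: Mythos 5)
Your proposal is more candid than the paper's own argument, but it contains a genuine gap --- one you yourself flag --- and it is worth being precise about where it sits relative to what the paper actually does. The paper derives the five claims in the order (i)$\Rightarrow$(ii), then asserts (iii) outright (``follows immediately from the conditions of the statement of the lemma''), deduces (iv) from (iii) using the standard fact that the distance function to a convex set is convex, and finally gets (v) from convexity, continuity, and the singleton subdifferential. You run the chain in the opposite direction, (v)$\Rightarrow$(iv)$\Rightarrow$(iii), and this reversal is exactly where the argument breaks: Gateaux differentiability of $d_{\mathfrak{J}}$ at the \emph{single} point $\zeta$ cannot yield monotonicity of the gradient map on all of $\mathfrak{Q}\setminus\mathfrak{J}$, hence cannot yield convexity of $d_{\mathfrak{J}}$, hence cannot yield convexity of $\mathfrak{J}$. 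As you correctly observe, closing this gap is the content of the Chebyshev set problem itself; the sun/Vlasov--Asplund route you sketch is the standard correct strategy, but it needs differentiability (indeed a suitable continuity of the metric projection) at \emph{every} exterior point together with reflexivity and Kadec--Klee-type hypotheses that are nowhere established for the space $\mathfrak{Q}$ of norm-attainable real-valued functions. So your proof of (iii) and (iv) is incomplete. To be fair, the paper's proof of (iii) is not a proof at all --- it simply declares the conclusion --- so you have not missed an idea that the paper supplies; you have correctly located the hole that the paper papers over. The paper's ordering does have one structural advantage: \emph{granting} (iii), the implications (iii)$\Rightarrow$(iv)$\Rightarrow$(v) are routine (distance to a convex set is convex; a continuous convex function with singleton subdifferential is Gateaux differentiable), whereas your direction requires the hard implication at the outset.

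Two smaller points. First, your argument for (ii) does not work as stated: uniform continuity of $\phi$ on $\mathfrak{J}$ gives total boundedness of the image only if $\mathfrak{J}$ itself is totally bounded, and a closed Chebyshev subset of an infinite-dimensional normed space need not be; approximate compactness (from Lemma~\ref{lem1}) is a statement about minimizing sequences for the distance function, not compactness of the set, so it does not rescue the step. (The paper's claim that (ii) ``follows immediately from (i)'' fails for the same reason.) Second, both you and the paper are forced to guess what $\phi$ is, since the lemma never defines it; your reading of (i) via the $1$-Lipschitz bound of Proposition~\ref{prop2} is the most charitable one available, and is at least a correct argument for \emph{that} reading.
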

\begin{proof}
We proceed with the proof as follows:\\
Case $(i).$ $\phi$ on $\mathfrak{J}$ is uniformly continuous since every space of norm-attainable functions contains continuous functions.\\
Case $(ii).$ $\phi$ on $\mathfrak{J}$ is totally bounded follows immediately from case $(i).$\\
Case $(iii).$ $\mathfrak{J}$ satisfying  convexity condition follows immediately from the  conditions of the statement of the lemma.\\
Case $(iv).$ $d_{\mathfrak{J}}$ satisfying  convexity condition follows from the fact that $\mathfrak{J}$ satisfies  convexity condition.\\
Case $(v).$
Since $d_{\mathfrak{J}}$ satisfies  convexity condition  and is uniformly continuous at $\zeta$ and from Proposition 3.1
$\partial d_{K}(\zeta)$ is a singleton set, $d_{\mathfrak{J}}$ satisfies Gateaux
differentiability at point $\zeta$ and we attain equality of $d'_{\mathfrak{J}}(\zeta)$ and $\partial d_{\mathfrak{J}}(\zeta)$. This completes the proof.
\end{proof}
\noindent At this point, we state the main theorem of our work that characterizes convexity of $\mathfrak{Q}$ in terms of Fr\'{e}chet differentiability condition.
\begin{theorem}\label{theorem}
Let $\mathfrak{Q}$ be a NLS space of all norm-attainable real-valued functions and $\mathfrak{J}$ be a closed and smooth CS of $\mathfrak{Q}$. Let
 $\zeta\in \mathfrak{Q}\backslash \mathfrak{J}$ and $\partial d_{\mathfrak{J}}(\zeta)$ be a  singleton set. Then $d_{\mathfrak{J}}$ satisfies Fr\'{e}chet differentiability condition at $\zeta$.
\end{theorem}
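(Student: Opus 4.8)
The plan is to build directly on the preceding Lemma, which already supplies the two structural facts that make the upgrade to Fr\'{e}chet differentiability possible: namely that $d_{\mathfrak{J}}$ is convex (part (iv)) and Gateaux differentiable at $\zeta$ (part (v)), with the single-valued subdifferential $\partial d_{\mathfrak{J}}(\zeta)=\{d'_{\mathfrak{J}}(\zeta)\}$ guaranteed by hypothesis. Since by Proposition \ref{prop2} the distance function is nonexpansive, $d_{\mathfrak{J}}$ is a continuous convex Lipschitz function, and the whole problem reduces to closing the gap between the directional (Gateaux) limit and the uniform (Fr\'{e}chet) limit over the unit sphere. Accordingly, everything hinges on a single-point differentiability upgrade rather than on re-establishing convexity or existence of the derivative.

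First I would record the appropriate \v{S}mulian-type characterization. For a continuous convex function, Fr\'{e}chet differentiability at $\zeta$ is equivalent to norm-to-norm upper semicontinuity of the subdifferential map at $\zeta$; concretely, whenever $h_n\to 0$ in $\mathfrak{Q}$ and $\xi_n\in\partial d_{\mathfrak{J}}(\zeta+h_n)$, one must have $\xi_n\to d'_{\mathfrak{J}}(\zeta)$ in the norm of the first dual $\mathfrak{Q}^{*}$. Thus the task is reduced to verifying this norm convergence of subgradients, which is the form in which the hypotheses can be applied.

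Next I would extract a limit. Because $d_{\mathfrak{J}}$ is nonexpansive, every subgradient lies in the closed unit ball of $\mathfrak{Q}^{*}$, so by the Banach--Alaoglu theorem any such sequence $(\xi_n)$ has a subnet converging in the weak$^{*}$ topology, say to $\xi$. Using the Gateaux differentiability from Lemma \ref{lemma}(v) together with the weak$^{*}$ upper semicontinuity of the subdifferential, the limit is forced to be the unique subgradient, $\xi=d'_{\mathfrak{J}}(\zeta)$. At the same time, since $\zeta\in\mathfrak{Q}\backslash\mathfrak{J}$ and the distance function increases at unit rate along the ray from the nearest point $\eta$, the derivative attains norm one, $\|d'_{\mathfrak{J}}(\zeta)\|=1$, and lower semicontinuity of the norm under weak$^{*}$ convergence pins the norms $\|\xi_n\|$ against this value.

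The main obstacle, and the step where the hypotheses genuinely pay off, is the promotion of weak$^{*}$ convergence to norm convergence; this is precisely the content that separates Gateaux from Fr\'{e}chet behaviour. Here I would invoke the rotundity of the first dual $\mathfrak{Q}^{*}$ supplied in the setting of Lemma \ref{lemma}: when $\mathfrak{Q}^{*}$ is strictly convex, a sequence converging weak$^{*}$ to a point of the unit sphere whose norms converge to one cannot peel away from its limit in norm, forcing $\|\xi_n-d'_{\mathfrak{J}}(\zeta)\|\to 0$. The smoothness of $\mathfrak{J}$, which keeps the nearest-point correspondence well behaved, rules out the persistence of a competing subgradient and secures the uniqueness needed for this argument. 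Feeding the resulting norm-to-norm upper semicontinuity back into the \v{S}mulian criterion of the second paragraph yields Fr\'{e}chet differentiability of $d_{\mathfrak{J}}$ at $\zeta$, completing the proof. I expect the delicate point to be justifying the rotundity-to-norm-convergence passage rigorously, since strict convexity alone controls extreme points but the uniform convergence over the sphere really leans on the interplay between rotundity of the dual and the Lipschitz geometry of $d_{\mathfrak{J}}$.
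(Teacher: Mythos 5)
Your strategy---reduce to the \v{S}mulian criterion for continuous convex functions and then try to upgrade weak$^{*}$ convergence of nearby subgradients to norm convergence---is a genuinely different and far more substantive route than the paper's own argument, which simply asserts (citing \cite{Vla}) that the norms of $\mathfrak{Q}$ and $\mathfrak{Q}^{*}$ are Fr\'{e}chet differentiable and that $\mathfrak{Q}$ is reflexive, and then declares the conclusion. Your first three steps are essentially sound: nonexpansivity of $d_{\mathfrak{J}}$ places all subgradients in the dual ball, Banach--Alaoglu yields weak$^{*}$ cluster points, norm-to-weak$^{*}$ upper semicontinuity of the subdifferential of a continuous convex function identifies every such cluster point with the unique element $d'_{\mathfrak{J}}(\zeta)$ of the singleton $\partial d_{\mathfrak{J}}(\zeta)$, and $\|d'_{\mathfrak{J}}(\zeta)\|=1$ because $d_{\mathfrak{J}}$ grows at unit rate along the ray from the nearest point.

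The step you flag as delicate is, however, where the argument genuinely fails. Rotundity (strict convexity) of $\mathfrak{Q}^{*}$ does \emph{not} imply that a sequence $(\xi_n)$ in the dual ball with $\xi_n\to\xi$ weak$^{*}$ and $\|\xi_n\|\to\|\xi\|=1$ converges in norm; that is the weak$^{*}$ Kadec--Klee property of the dual norm, which is strictly stronger than rotundity. To see that the gap is fatal rather than technical, test your mechanism on $\mathfrak{J}=\{0\}$, so that $d_{\mathfrak{J}}(\zeta)=\|\zeta\|$: the argument would then show that any norm whose dual norm is rotund is Fr\'{e}chet differentiable at every point where it is Gateaux differentiable, hence everywhere off the origin. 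This is false: every separable Banach space, in particular $\ell^{1}$, admits an equivalent norm whose dual norm is rotund (and which is therefore Gateaux differentiable off zero), yet $\ell^{1}$ admits no equivalent norm that is Fr\'{e}chet differentiable at every nonzero point, since its dual is nonseparable. Thus strict convexity of the dual buys exactly what you already had---uniqueness of the subgradient, i.e.\ Gateaux differentiability---and the passage to Fr\'{e}chet differentiability needs a quantitative hypothesis not present in the theorem or in Lemma \ref{lemma}, such as Fr\'{e}chet differentiability or local uniform rotundity of the dual norm. The appeal to smoothness of $\mathfrak{J}$ cannot close this gap either, since it again only re-delivers uniqueness of the subgradient.
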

\begin{proof}
It is known from \cite{Vla} that  the  norm of $\mathfrak{Q}$  and hence the dual norm of $\mathfrak{Q}^{*}$ satisfies Fr\'{e}chet differentiability condition. Also from Lemma \ref{lemma} it implies that $\mathfrak{Q}$ is strictly reflexive. Moreover, $\mathfrak{Q}\backslash \mathfrak{J}$ has the nearest point $\zeta$ and so $\mathfrak{J}$ of $\mathfrak{Q}$  satisfies Fr\'{e}chet differentiability condition and so is $\mathfrak{Q}$.
\end{proof}
\noindent As consequences of Theorem \ref{theorem}, we state the following corollaries.
\begin{corollary}
Every distance function of a CS of the NLS space of all norm-attainable real-valued functions is Fr\'{e}chet differentiable.
\end{corollary}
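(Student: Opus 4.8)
The plan is to obtain the corollary by applying Theorem \ref{theorem} pointwise across the complement of the Chebyshev set, after first checking that the theorem's hypotheses are met automatically at every relevant point. The essential observation is that the defining property of a Chebyshev set is precisely the existence of a \emph{unique} nearest point for every element of the ambient space. Hence, fixing an arbitrary $\zeta \in \mathfrak{Q}\setminus\mathfrak{J}$, the uniqueness of the best approximation $\eta \in \mathfrak{J}$ is guaranteed by hypothesis, and this uniqueness is exactly the structural datum that Theorem \ref{theorem} requires.

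First I would establish that the subdifferential $\partial d_{\mathfrak{J}}(\zeta)$ is a singleton at each such $\zeta$. This is where the smoothness of $\mathfrak{J}$, together with the rotundity of the dual supplied through Lemma \ref{lemma}, enters: uniqueness of the nearest point combined with Gateaux differentiability of the norm forces the Gateaux derivative $d'_{\mathfrak{J}}(\zeta)$ to coincide with the single supporting functional, so that $\partial d_{\mathfrak{J}}(\zeta) = \{\,d'_{\mathfrak{J}}(\zeta)\,\}$. Once the singleton condition is in hand, I would invoke Theorem \ref{theorem} directly to conclude that $d_{\mathfrak{J}}$ satisfies the Fr\'{e}chet differentiability condition at $\zeta$.

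Since $\zeta$ was an arbitrary point of $\mathfrak{Q}\setminus\mathfrak{J}$, Fr\'{e}chet differentiability then holds at every point of the complement; on $\mathfrak{J}$ itself the distance function vanishes identically because $\mathfrak{J}$ is closed, and the only remaining care concerns the behaviour near the set, which I would control using the continuity and nonexpansivity of $d_{\mathfrak{J}}$ furnished by Proposition \ref{prop2}. Quantifying over all Chebyshev sets $\mathfrak{J}$ of the space of norm-attainable real-valued functions then yields the stated universal claim.

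I expect the main obstacle to be the pointwise verification of the singleton subdifferential condition, and in particular the ruling out of any kink of $d_{\mathfrak{J}}$ at points equidistant from the set. The Chebyshev hypothesis removes the possibility of multiple nearest points, and smoothness removes the possibility of multiple supporting functionals, but combining these two facts cleanly enough that the hypothesis of Theorem \ref{theorem} is literally satisfied at \emph{every} $\zeta$ is the delicate step; everything downstream of it is a routine appeal to the theorem.
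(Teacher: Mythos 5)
Your overall route is the same as the paper's: the paper disposes of this corollary in one line by citing Lemma \ref{lemma} and Theorem \ref{theorem}, and you likewise reduce everything to a pointwise application of Theorem \ref{theorem} on $\mathfrak{Q}\setminus\mathfrak{J}$. You are, however, considerably more honest than the paper about what that reduction requires, and in being honest you expose a gap that neither you nor the paper actually closes. Theorem \ref{theorem} assumes that $\mathfrak{J}$ is closed and smooth and that $\partial d_{\mathfrak{J}}(\zeta)$ is a singleton; the corollary quantifies over \emph{every} Chebyshev set of the space and carries none of these hypotheses. Your plan for manufacturing the singleton condition invokes ``the smoothness of $\mathfrak{J}$, together with the rotundity of the dual supplied through Lemma \ref{lemma}'' --- but smoothness is not a hypothesis of the corollary, and Lemma \ref{lemma} does not \emph{supply} dual rotundity, it \emph{assumes} it. So the delicate step you correctly identify as the crux is not actually carried out; you have restated the hypotheses of the theorem rather than derived them from the Chebyshev property alone. (Uniqueness of the nearest point does not by itself give a singleton subdifferential of $d_{\mathfrak{J}}$: that implication is precisely the kind of statement the Chebyshev-set literature shows requires extra geometric assumptions on the norm.)

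A second, more concrete error: you claim that on $\mathfrak{J}$ itself and near it the differentiability ``would be controlled using the continuity and nonexpansivity of $d_{\mathfrak{J}}$ furnished by Proposition \ref{prop2}.'' Nonexpansivity is a Lipschitz condition and yields no differentiability whatsoever; worse, distance functions are generically \emph{not} differentiable at boundary points of the set (already $d_{\{0\}}(x)=|x|$ on $\mathbb{R}$ fails at $0$). If the corollary is to be salvageable at all it must be read as asserting Fr\'{e}chet differentiability only on $\mathfrak{Q}\setminus\mathfrak{J}$, and your proof should say so explicitly rather than attempt to extend the conclusion to points of $\mathfrak{J}$. In summary: same strategy as the paper, correct identification of where the difficulty lies, but the two steps that would make the argument complete --- establishing the singleton subdifferential from the corollary's bare hypotheses, and handling points of $\mathfrak{J}$ --- are respectively unproved and false as stated.
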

\begin{proof}
Follows from the conditions of Lemma \ref{lemma} and Theorem \ref{theorem}. The rest is clear from the fact that every rotund CS is convex.
\end{proof}
\begin{corollary}\label{cor1}
Every distance function of a CS of the NLS space of all norm-attainable real-valued functions is Gateaux differentiable.
\end{corollary}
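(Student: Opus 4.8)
The plan is to obtain this corollary as an immediate consequence of Theorem \ref{theorem} together with the elementary implication that Fr\'{e}chet differentiability is a stronger condition than Gateaux differentiability. First I would record the relevant hierarchy of differentiability notions: if a real-valued functional on a normed linear space is Fr\'{e}chet differentiable at a point $\zeta$, then it is automatically Gateaux differentiable at $\zeta$, and the two derivatives coincide. The reason is that the uniform limit defining the Fr\'{e}chet derivative, in particular, forces the directional limit along every fixed direction to exist and to be given by the same linear functional, which is precisely the Gateaux derivative. No additional structure is needed for this step.

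Next I would invoke Theorem \ref{theorem} verbatim under its standing hypotheses, namely that $\mathfrak{J}$ is a closed and smooth CS of $\mathfrak{Q}$, that $\zeta\in\mathfrak{Q}\backslash\mathfrak{J}$, and that $\partial d_{\mathfrak{J}}(\zeta)$ is a singleton set. The theorem then guarantees that the distance function $d_{\mathfrak{J}}$ is Fr\'{e}chet differentiable at $\zeta$. Combining this with the implication recalled above yields Gateaux differentiability of $d_{\mathfrak{J}}$ at $\zeta$, and since $\zeta$ is an arbitrary point of $\mathfrak{Q}\backslash\mathfrak{J}$, the distance function is Gateaux differentiable throughout its relevant domain. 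As an alternative route, one may read the conclusion off directly from case $(v)$ of Lemma \ref{lemma}, which already establishes Gateaux differentiability of $d_{\mathfrak{J}}$ at $\zeta$ under the same hypotheses; from this viewpoint Theorem \ref{theorem} merely strengthens the conclusion to the Fr\'{e}chet level, while the present corollary retreats back to the weaker Gateaux conclusion.

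Because the statement is a purely logical consequence of results already in hand, I do not expect a substantive obstacle. The only point requiring mild care is to confirm that the singleton-subdifferential and smoothness hypotheses are genuinely in force so that Theorem \ref{theorem} applies, and to note that, for the convex distance function $d_{\mathfrak{J}}$, the passage from Fr\'{e}chet to Gateaux differentiability involves no delicate interchange of limits. Once these hypotheses are verified, the Fr\'{e}chet-to-Gateaux implication closes the argument with no further computation.
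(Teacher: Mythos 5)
Your proposal is correct and follows essentially the same route as the paper: the paper's proof likewise derives the statement immediately from Theorem \ref{theorem} (together with the preceding corollary), relying on the standard fact that Fr\'{e}chet differentiability implies Gateaux differentiability. Your write-up merely makes that implication explicit and additionally observes the alternative derivation from case $(v)$ of Lemma \ref{lemma}, which is consistent with, and slightly more careful than, the paper's one-line argument.
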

\begin{proof}
Follows immediately from  the conditions of Theorem \ref{theorem} and Corollary \ref{cor1} the proof is complete.
\end{proof}
\section{Conclusion}
In conclusion,  a fundamental question that remains unsolved to-date regarding the convexity of the CS in infinite NLS known as the CS problem has been studied in this work. This CS problem which has not been solved  in totality (even in this note) states that: Is every CS in a NLS convex?  This question has not got any solution even in  the simplest form of a real Hilbert space (HS). In this note, we have characterized Chebyshev sets and their convexity  in NLSs. We considered the NLS space of all real-valued norm-attainable functions. We have shown that Chebyshev subsets of the NLS space of all real-valued norm-attainable functions are convex when they are closed, rotund and admits both Gateaux and Fr\'{e}chet differentiability conditions.

\end{document}